\documentclass[11pt,a4paper]{article}
\usepackage[margin=0.7in]{geometry}
\usepackage{bm} 
\usepackage[utf8]{inputenc}
\usepackage{tabularx}
\usepackage{caption}
\usepackage{subcaption}
\usepackage{booktabs}
\usepackage{comment}
\usepackage{algorithm}
\usepackage{textcomp} 
\usepackage{algpseudocode}
\usepackage{abstract}
\usepackage{enumitem}
\usepackage{xcolor}
\usepackage{amsthm}
\newtheorem{proposition}{Proposition}
\usepackage{amsmath,amssymb}

\usepackage{cite}

\usepackage{float}
\usepackage{nameref,hyperref}

\usepackage[right]{lineno}

\usepackage{changepage}

\usepackage[labelfont=bf]{caption}

\usepackage{color}

\definecolor{Gray}{gray}{.25}

\usepackage{graphicx}

\usepackage[english]{babel}
\usepackage[autostyle]{csquotes}
\usepackage{longtable}
\begin{document}


\title{\Large \textbf{A Semigroup-Aware Kernel Neural Operator with Latent Exponential Integration for Forced PDEs}
}
\author{}
\date{}
\maketitle

\begin{center}
Naseem Abbas\textsuperscript{1},
Vittorio Colao\textsuperscript{1},
Davide Macrì\textsuperscript{2},
Matthieu Barreau\textsuperscript{3},
William Spataro\textsuperscript{1}
\end{center}

\begin{center}
$^{1}${Department of Mathematics and Computer Science, University of Calabria, Via P. Bucci, 30B, 87036 Arcavacata di Rende, CS, Italy.}
\\
$^{2}${Institute for High Performance Computing and Networking, Italian National Research Council, 87036 Arcavacata di Rende, CS, Italy.}
\\
$^{3}${Digital Futures and KTH Royal Institute of Technology, Stockholm, Sweden.}
\end{center}

\begin{abstract}
\begin{abstract}
We propose a semigroup-aware structured kernel neural
operator for source-driven evolution PDEs.
Unlike generic time-conditioned kernel models, the proposed
formulation represents temporal dependence through a latent
exponential propagator \(e^{tA_\theta}\), where \(A_\theta\)
is a learned latent generator. This provides a structured transition from a non-ODE kernel
representation to an ODE-inspired kernel operator whose
latent temporal evolution satisfies the composition
principle of evolution semigroups. For time-dependent forcing terms, a direct
kernel-based evaluation requires repeated quadrature over
delayed kernel evaluations.
To address this computational bottleneck, we derive a
latent-first reformulation of the source term: the forcing
field is projected onto the kernel-induced latent space,
where its accumulated effect is represented as the solution
of a forced linear latent dynamical system.
For a fixed learned kernel, this reformulation is exact.
The resulting system can then be integrated using a
\(\phi_1\)-based exponential integrator, yielding a
source-evaluation path that is structurally aligned with
the learned latent propagator. The resulting framework combines geometry- and
physics-conditioned low-rank kernel factorization with
alternative source-evaluation paths based on either
explicit Duhamel quadrature or latent exponential
integration. Dissipativity constraints on the latent generator and
identity-preserving corrections near the initial time
are optional architectural components.
Numerical experiments on advection-iffusion-reaction (ADR) 
problems compare the non-ODE and semigroup-aware
formulations, evaluate the quadrature and \(\phi_1\)
source paths, and assess predictive accuracy and
computational cost.
\end{abstract}
\noindent\textbf{Keywords:} Advection-diffusion-reaction equations; Neural operators; Semigroup-aware learning; Latent exponential propagator; Source-driven evolution PDEs; Duhamel quadrature; Exponential integrators.
\end{abstract}

\section{Introduction}

Many scientific and engineering fields, including fluid dynamics, elasticity, thermal analysis, and electrostatics, rely heavily on partial differential equations (PDEs). Their widespread use has motivated the development of classical numerical techniques such as finite difference methods (FDMs) and finite element methods (FEMs), which solve PDEs by discretizing the physical domain into lattices, elements, or meshes. These approaches are robust and well established, but their application to complex geometries often requires costly meshing procedures \cite{hu2018tetrahedral,hu2020fast}. This limitation is particularly relevant in engineering design workflows, where one often needs to evaluate many related PDE instances under changing geometries, boundary conditions, coefficients, or source terms.

Learning-based solvers have recently emerged as alternatives to traditional numerical methods. One line of work learns solution operators that map input functions to output solutions and then evaluates the solution with a single forward pass \cite{li2020fourier,li2020neural,li2023geometry,li2023fourier,li2022transformer,hao2023gnot,alkin2024universal,wang2024latent,wu2024transolver,xiao2023improved}. Another line of work, including physics-informed neural networks, parameterizes the solution of a given PDE instance and optimizes it using governing equations and observations \cite{krishnapriyan2021characterizing,raissi2019physics}. Although these approaches can reduce inference costs compared with classical solvers, several challenges remain. PINN-type methods typically require retraining when the PDE instance changes \cite{raissi2019physics,li2023neural,nam2024solving}, while many neural-operator architectures are trained on fixed or limited families of domains and may struggle when geometry, boundary data, coefficients, and source functions vary simultaneously.

Neural operators provide a functional viewpoint on PDE learning by approximating maps between function spaces rather than maps between fixed finite-dimensional vectors. Two main constructions have been widely explored. The first one follows the branch--trunk paradigm, where two subnetworks encode input functions and spatial coordinates, as in the operator approximation framework of \cite{chen1995universal} and its extension to Deep Operator Networks \cite{lu2021learning}. The second construction introduces learnable integral operators into neural architectures, leading to Fourier, multipole, and graph neural operators \cite{li2020fourier,li2020multipole,kovachki2023neural}. These models have substantially advanced PDE surrogate modeling, including over irregular geometries and varying discretizations. Related approaches also learn Green's functions or Green-type operators for linear PDEs, enabling fast evaluation for varying source or boundary functions within a given setting \cite{boulle2022data,teng2022learning,negi2024learning,melchers2024neural}. However, many of these methods either rely on domain-specific learned Green's functions, require sufficient sampling of the input functions, or do not explicitly encode the temporal composition structure of evolution equations.

In this work, we focus on source-driven autonomous linear
evolution PDEs whose solutions can be represented through
operator propagation.
The solution contains two components: the propagation of
the initial condition and the accumulated effect of the
time-dependent source term.
The former is governed by an evolution semigroup \(T(t)\),
while the latter appears through the Duhamel integral.
This structure motivates a neural operator that uses a
common learned propagator for both contributions and
incorporates an explicit temporal evolution law.

We start from a structured conditional kernel neural
operator and distinguish between a non-ODE time-conditioned
formulation and a semigroup-aware ODE formulation.
In a generic time-conditioned kernel model, the time
variable is provided to the neural network as an additional
input feature.
While expressive, this construction does not explicitly
enforce the identities \(T(0)=I\) and
\(T(t+s)=T(t)T(s)\), which characterize evolution semigroups.
We therefore introduce a latent exponential propagator
\(e^{tA_\theta}\) inside the learned kernel, where
\(A_\theta\) is a learned latent generator.
This yields a kernel representation whose temporal
component is governed by an autonomous linear latent flow
rather than by an unconstrained time feature.
The semigroup identities hold exactly for this latent
flow; their transfer to the reconstructed operators
requires additional encoding--decoding compatibility.

For the source contribution, the factorized kernel permits
an exact rearrangement of the learned Duhamel response.
The forcing field is first projected into latent space,
where its accumulated effect solves a forced linear
dynamical system.
This system can be evaluated by temporal quadrature or
by a \(\phi_1\)-based exponential integrator.
Spatial projection and temporal propagation can thus be
organized separately, and the resulting latent response
can be shared across output locations.
This rearrangement preserves the continuous learned
operator, whereas the choice of temporal discretization
introduces a separate numerical approximation.

The proposed framework is evaluated on
advection--diffusion--reaction problems, where the solution
depends on diffusion, advection, decay, initial data,
boundary information, geometry, and source terms.
We compare the non-ODE and semigroup-aware formulations
and evaluate explicit Duhamel quadrature against latent
exponential integration.
Across independently trained models, predictive accuracy
depends on both the learned representation and its
numerical evaluation.
To distinguish these effects, we also compare source
integrators using identical trained checkpoints.
For temporally localized sources, adequate temporal
sampling remains necessary independently of the chosen
integration scheme.

The main contributions of this work are summarized
as follows:
\begin{enumerate}
    \item
    We introduce a semigroup-aware extension of structured
    conditional kernel neural operators, replacing generic
    time conditioning with a latent exponential propagator
    \(e^{tA_\theta}\).

    \item
    We provide an operator-factorization interpretation
    that distinguishes exact latent temporal composition
    from composition of the reconstructed operators and
    identifies the associated encoding--decoding defect.

    \item
    We construct predictions by propagating the initial
    condition and accumulating time-dependent source
    contributions through a common learned propagator
    in the Duhamel formula.

    \item
    We derive an exact latent-first reformulation of the
    learned Duhamel contribution and establish discrete
    equivalence with kernel evaluation when identical
    quadrature rules and source values are used.

    \item
    We implement a \(\phi_1\)-based exponential-integrator
    path for the forced latent dynamics and compare its
    left-endpoint and midpoint forcing variants with
    explicit Duhamel quadrature.

    \item
    We evaluate predictive accuracy, semigroup residuals,
    and computational cost, using comparisons between
    independently trained models and source-integrator
    comparisons on identical trained checkpoints.
\end{enumerate}

\subsection{Related Work}

Advection--diffusion--reaction (ADR) equations provide a classical mathematical framework for modeling transport, spreading, and local reaction phenomena. They have been used in several scientific applications, including the analysis of large-scale fish movement \cite{adam2004use}, ecological dispersal processes \cite{cosner2014reaction}, and turbulent flame propagation in astrophysical settings \cite{khokhlov1995propagation}. In these applications, the ADR model is typically specified through problem-dependent physical coefficients and expert-designed assumptions. In contrast, our objective is not to hand-design an ADR model, but to learn a source-driven evolution operator associated with a family of ADR-type problems.

Physics-informed neural networks (PINNs) \cite{raissi2019physics} approximate the solution of a PDE by parameterizing it with a neural network and optimizing the network through losses derived from the governing equations, boundary conditions, and observations. Their implementation is facilitated by automatic differentiation tools available in modern deep learning frameworks \cite{paszke2017automatic,bradbury2018jax}. However, PINNs usually solve one PDE instance at a time. When the source term, boundary condition, physical coefficients, or computational domain change, the model often needs to be retrained or re-optimized. This makes PINN-type approaches less suitable when the goal is to learn a reusable solution operator over a family of related PDE instances.

Neural operators address this limitation by learning mappings between function spaces. Graph Neural Operators (GNOs) \cite{li2020neural} approximate integral kernels through local graph-based message passing, while Fourier Neural Operators (FNOs) \cite{li2020fourier} learn global integral operators in the spectral domain using the fast Fourier transform. Subsequent works combine local and global representations to handle irregular geometries and more complex physical settings \cite{li2023geometry,li2023fourier}. Transformer-based operator-learning models have also been proposed to capture interactions between mesh or point samples through attention mechanisms \cite{li2023scalable,li2022transformer,hao2023gnot,xiao2023improved,vaswani2017attention}. These architectures provide powerful surrogate models for PDE solution operators, but their temporal dependence is often represented through generic conditioning or learned input-output mappings rather than through an explicit semigroup-consistent propagation mechanism.

A related line of work focuses on learning Green's functions or Green-type operators for linear PDEs. Boulle et al. \cite{boulle2022data} learn Green's functions from forcing--solution pairs using rational neural networks \cite{boulle2020rational}. Teng et al. \cite{teng2022learning} extend this idea by approximating Dirac delta functions with Gaussian distributions, enabling the evaluation of solutions under varying source and boundary functions. Negi et al. \cite{negi2024learning} use radial-basis-function neural networks to approximate free-space Green's functions. More recently, Neural Green's Operators have been introduced as finite-dimensional approximations of parametric Green's operators \cite{melchers2024neural}. These approaches are closely related to our kernel-operator viewpoint. However, many of them learn Green's functions tied to specific domains or rely on fixed trial and test representations, whereas our formulation learns a conditional low-rank kernel whose components are adapted through geometry, physics, and problem-context embeddings.

The present work also relates to semigroup-aware operator learning, continuous-depth models, and Koopman-inspired latent dynamics. Semigroup-aware operator learning explicitly accounts for temporal composition and variable propagation intervals \cite{chen2023deep}. Continuous-depth and latent neural dynamics provide a natural way to represent temporal evolution through learned flows \cite{sander2024deeper,xiong2024koopman}. Koopman-based approaches similarly seek latent representations in which nonlinear dynamics can be approximated by linear evolution \cite{xiong2024koopman}. At the same time, recent analyses highlight that learned Koopman or spectral representations should be interpreted with care, since finite-dimensional learned generators do not necessarily recover the true physical generator of the underlying PDE \cite{colbrook2024limits,colbrook2022foundations,colbrook2022computing}. We follow this cautious interpretation: in our framework, $A_\theta$ is a learned latent generator used to construct a structured propagator, not an identified differential operator of the original PDE.

Our work lies at the intersection of these directions. Starting from a structured conditional kernel operator, we replace generic time conditioning with a latent exponential propagator $e^{tA_\theta}$, yielding a semigroup-aware kernel representation. We then use the Duhamel formula to separate the propagation of the initial condition from the contribution of time-dependent source terms. For the latter, we compare an explicit delayed-kernel quadrature with a latent-first reformulation in which the source is projected into the learned latent space and accumulated through a forced linear latent system. This formulation enables a $\phi_1$-based exponential-integrator path while preserving a clear distinction between the learned latent propagation mechanism and the true PDE generator.

\section{Problem statement}
\label{sec:prob}

\subsection{Source-driven advection--diffusion--reaction dynamics}
\label{sec:prob:adr}

We study source-driven advection--diffusion--reaction (ADR)
equations, motivated by contaminant transport.
Advection carries the contaminant with the flow, diffusion
describes spatial spreading, and first-order decay accounts
for attenuation. Time-dependent sources represent subsequent
releases whose timing and spatial distribution determine
the resulting concentration field
\cite{ZhengBennett2002,Wexler1992}.

Specifically, we consider
\begin{equation}
\label{eq:adr}
\begin{cases}
\partial_t u
=
D\Delta u-\mathbf{v}\cdot\nabla u-\lambda u+f,
& \text{in }\Omega\times(0,T),\\
u(\cdot,0)=u_0,
& \text{in }\Omega,
\end{cases}
\end{equation}
with prescribed boundary conditions.
Here $D>0$, $\mathbf{v}\in\mathbb{R}^2$, and $\lambda\geq 0$
are constant diffusion, advection, and decay coefficients,
respectively. The experiments use the normalized domain
$\Omega=(0,1)^2$.

Let $X=L^2(\Omega;\mathbb{R})$, and let $\gamma$ collect
the prescribed physical parameters and boundary information.
For a homogeneous boundary realization generating a
$C_0$-semigroup $\{T_\gamma(t)\}_{t\geq 0}$ on $X$, the
mild solution for $u_0\in X$ and $f\in L^1(0,T;X)$ is
\begin{equation}
\label{eq:duhamel}
u(t)
=
T_\gamma(t)u_0
+
\int_0^t T_\gamma(t-s)f(s)\,\mathrm{d}s,
\end{equation}
where the integral is understood in the Bochner sense
\cite[Section~4.2]{Pazy1983}.
Nonhomogeneous boundary data, when present, require an
additional treatment such as boundary lifting.
The semigroup identities
\[
T_\gamma(0)=I,
\qquad
T_\gamma(t+s)=T_\gamma(t)T_\gamma(s)
\]
describe homogeneous propagation, while
\eqref{eq:duhamel} accumulates source contributions
according to their elapsed propagation times.

\subsection{Operator-learning task}
\label{sec:prob:operator_task}

The objective is to approximate the solution map across
initial conditions, source histories, and prescribed
problem contexts:
\begin{equation}
\label{eq:operator_learning_task}
\mathcal{G}_{\theta}(u_0,f,\gamma)(t,\mathbf{x})
\approx u(\mathbf{x},t).
\end{equation}
For fixed $\gamma$, this map is linear in $(u_0,f)$.
Our construction preserves this superposition structure
by using a common learned propagator for both terms
in~\eqref{eq:duhamel}. It combines a finite-rank spatial
kernel with autonomous linear dynamics in a latent space.

\section{Methods}
\label{sec:methods}

\subsection{Low-rank kernel representation}
\label{sec:methods:low_rank}

The spatial factorization is motivated by the
singular-value decomposition of the positive-time
evolution operator.
Assume that, for $t>0$, $T_\gamma(t)$ admits a kernel
$K_\gamma(t,\cdot,\cdot)\in L^2(\Omega\times\Omega)$.
This property follows, for example, from bounded
positive-time heat kernels on a finite-measure domain;
standard Dirichlet and Neumann ADR realizations on
bounded Lipschitz domains satisfy suitable Gaussian
bounds \cite{Ouhabaz2004}.

The operator $T_\gamma(t)$ is then Hilbert--Schmidt
\cite[Theorem~1.6.1]{Arendt2006}, and its kernel admits
the Schmidt expansion
\begin{equation}
\label{eq:evolution_kernel_svd}
\begin{aligned}
K_\gamma(t,\mathbf{x},\boldsymbol{\xi})
=
\sum_{j\geq 1}
&\sigma_j(t;\gamma)
\varphi_j(t,\mathbf{x};\gamma)\\
&\cdot\chi_j(t,\boldsymbol{\xi};\gamma)
\end{aligned}
\end{equation}
in $L^2(\Omega\times\Omega)$.
Here $\sigma_j$ are the singular values, and
$\varphi_j,\chi_j$ are the left and right singular
functions. The rank-$R$ truncation satisfies
\[
\begin{aligned}
\|T_\gamma(t)-T_{\gamma,R}(t)\|_{\mathcal{L}(X)}
&=\sigma_{R+1}(t;\gamma),\\
\|K_\gamma(t)-K_{\gamma,R}(t)\|_{L^2(\Omega\times\Omega)}^2
&=\sum_{j>R}\sigma_j(t;\gamma)^2.
\end{aligned}
\]
These classical identities justify separable spatial
approximation when the singular-value tail is small
\cite{Simon2005}.
They do not imply a common finite-dimensional temporal
generator: both singular systems generally depend on
time and need not be eigenfunctions of the PDE operator.

We parameterize a separable kernel using neural features
and context-dependent coupling matrices.
Let $E$ be the feature dimension and $R\leq E$ the
coupling rank. A time-conditioned baseline is
\begin{equation}
\label{eq:time_conditioned_kernel}
\begin{aligned}
K_{\theta}^{\mathrm{TC}}
(t,\mathbf{x},\boldsymbol{\xi};\gamma)
={}&
\Phi_{\theta}^{\mathrm{TC}}(t,\mathbf{x})
M_{A,\theta}(\gamma)\\
&\cdot
M_{B,\theta}(\gamma)^\top
\psi_{\theta}(\boldsymbol{\xi})^\top,
\end{aligned}
\end{equation}
where
$\Phi_{\theta}^{\mathrm{TC}},\psi_\theta
\in\mathbb{R}^{1\times E}$ and
$M_{A,\theta},M_{B,\theta}\in\mathbb{R}^{E\times R}$.
The coupling matrices are generated by a hypernetwork
conditioned on $\gamma$
\cite{HaEtAl2017}.
This construction follows the separable parameterization
of low-rank neural operators \cite{kovachki2023neural}.
Its factors are not constrained to be singular functions,
so SVD optimality is a motivation rather than a guarantee
for the learned representation.

\subsection{Latent semigroup propagation}
\label{sec:methods:semigroup_kernel}

We replace direct time conditioning by an autonomous
latent flow:
\begin{equation}
\label{eq:kernel_factorization}
\begin{aligned}
K_{\theta}(t,\mathbf{x},\boldsymbol{\xi};\gamma)
={}&
\Phi_{\theta}(\mathbf{x})e^{tA_{\theta}}
M_{A,\theta}(\gamma)\\
&\cdot
M_{B,\theta}(\gamma)^\top
\psi_{\theta}(\boldsymbol{\xi})^\top,
\end{aligned}
\end{equation}
where $\Phi_{\theta}(\mathbf{x})\in\mathbb{R}^{1\times E}$
is a spatial decoder and
$A_{\theta}\in\mathbb{R}^{E\times E}$ is a learned,
time-independent latent generator.
The spatial features are assumed square-integrable.
The kernel has rank at most $R$ at each fixed time.

The exponential is the solution operator of
$\dot z=A_\theta z$ and satisfies
\begin{equation}
\label{eq:latent_semigroup_identities}
e^{0A_\theta}=I_E,
\qquad
e^{(t+s)A_\theta}=e^{tA_\theta}e^{sA_\theta}.
\end{equation}
Thus, temporal predictions share a common latent
evolution law, rather than being specified independently
through a time-conditioned neural encoder.
The matrix $A_\theta$ represents reduced dynamics and
is not identified with the differential generator
of the ADR equation.

Define the encoding and decoding maps by
\begin{equation}
\label{eq:encoding_decoding_maps}
\begin{aligned}
\mathcal{E}_{\theta,\gamma}g
&=
M_{A,\theta}(\gamma)M_{B,\theta}(\gamma)^\top
\int_\Omega
\psi_\theta(\boldsymbol{\xi})^\top
g(\boldsymbol{\xi})\,\mathrm{d}\boldsymbol{\xi},\\
(\mathcal{P}_\theta z)(\mathbf{x})
&=\Phi_\theta(\mathbf{x})z.
\end{aligned}
\end{equation}
The reconstructed propagator is
\begin{equation}
\label{eq:encoded_latent_propagator}
\widehat T_\theta(t;\gamma)
=
\mathcal{P}_\theta e^{tA_\theta}
\mathcal{E}_{\theta,\gamma}.
\end{equation}

The latent identities
\eqref{eq:latent_semigroup_identities} do not automatically
transfer to the reconstructed operators.
Suppressing $\theta,\gamma$ and writing $G=A_\theta$,
direct multiplication gives
\begin{equation}
\label{eq:decoded_semigroup_defect}
\begin{aligned}
\widehat T(t)\widehat T(s)-\widehat T(t+s)
=
\mathcal{P}e^{tG}
(\mathcal{E}\mathcal{P}-I_E)
e^{sG}\mathcal{E}.
\end{aligned}
\end{equation}
Moreover, $\widehat T(0)=\mathcal{P}\mathcal{E}$
is a finite-rank reconstruction operator.
Accordingly, ``semigroup-aware'' refers to the latent
propagation law; exact composition after reconstruction
requires additional encoding--decoding compatibility
on the relevant latent states.

Latent growth can be controlled through the symmetric
part of $A_\theta$. In particular,
\[
\frac{A_\theta+A_\theta^\top}{2}\preceq-\eta I_E,
\qquad \eta\geq 0,
\]
implies
\[
\|e^{tA_\theta}\|_2\leq e^{-\eta t}.
\]
This is a sufficient stability condition, obtained by
differentiating the squared Euclidean norm of a latent
trajectory and applying Gronwall's inequality.

\subsection{Duhamel representation in latent space}
\label{sec:methods:kernel}

Using the same propagator for the initial state and
the forcing, define
\begin{equation}
\label{eq:initial_condition_path}
u_{\mathrm{IC},\theta}(t)
=
\widehat T_\theta(t;\gamma)u_0
\end{equation}
and
\begin{equation}
\label{eq:duhamel_full}
\begin{aligned}
u_{\mathrm{src},\theta}(t,\mathbf{x})
=
\int_0^t\int_\Omega
&K_\theta(t-s,\mathbf{x},\boldsymbol{\xi};\gamma)\\
&\cdot f(s,\boldsymbol{\xi})
\,\mathrm{d}\boldsymbol{\xi}\,\mathrm{d}s.
\end{aligned}
\end{equation}
The prediction is
\begin{equation}
\label{eq:solution_decomposition}
u_\theta(t)
=
u_{\mathrm{IC},\theta}(t)+u_{\mathrm{src},\theta}(t).
\end{equation}

For a fixed context, introduce the projected source
\begin{equation}
\label{eq:qR_def}
q_R(s)
=
M_{B,\theta}(\gamma)^\top
\int_\Omega
\psi_\theta(\boldsymbol{\xi})^\top
f(s,\boldsymbol{\xi})\,\mathrm{d}\boldsymbol{\xi}
\end{equation}
and
\begin{equation}
\label{eq:bE_def}
b_E(s)
=
M_{A,\theta}(\gamma)q_R(s)
=
\mathcal{E}_{\theta,\gamma}f(s).
\end{equation}
Since $f\in L^1(0,T;X)$ and the encoder is bounded,
$b_E\in L^1(0,T;\mathbb{R}^{E})$.
Factoring out the terms independent of the spatial
integration variable gives the exact identity
\begin{equation}
\label{eq:duhamel_latent}
u_{\mathrm{src},\theta}(t,\mathbf{x})
=
\Phi_\theta(\mathbf{x})
\int_0^t e^{(t-s)A_\theta}b_E(s)\,\mathrm{d}s.
\end{equation}
Equivalently,
\begin{equation}
\label{eq:latent_ode}
\dot z_{\mathrm{src}}
=
A_\theta z_{\mathrm{src}}+b_E,
\qquad z_{\mathrm{src}}(0)=0,
\end{equation}
with
$u_{\mathrm{src},\theta}=\mathcal{P}_\theta z_{\mathrm{src}}$.
This is a forced linear system; the spatial reduction
does not change the continuous learned operator.

The use of a common propagator also connects homogeneous
approximation accuracy to the forced prediction.
Writing
$\mathcal{R}_\theta(\tau)
=T_\gamma(\tau)-\widehat T_\theta(\tau;\gamma)$,
subtraction of the two Duhamel representations yields
\begin{equation}
\label{eq:forced_prediction_error}
\begin{aligned}
\|u(t)-u_\theta(t)\|_X
&\leq
\|\mathcal{R}_\theta(t)u_0\|_X\\
&\quad+
\int_0^t
\|\mathcal{R}_\theta(t-s)f(s)\|_X\,\mathrm{d}s.
\end{aligned}
\end{equation}
This estimate concerns the continuous surrogate;
numerical integration and source reconstruction
introduce additional errors.

\subsection{Quadrature and computational reduction}
\label{sec:methods:latent}

Let $\{(\boldsymbol{\xi}_n,w_n)\}_{n=1}^{N_s}$ be a
spatial quadrature rule and
$\{(s_j,\omega_j)\}_{j=1}^{N_q}$ a Gauss--Legendre rule
mapped to $[0,t]$.
Write $f_h(s_j,\boldsymbol{\xi}_n)$ for the source values
used numerically, obtained either directly or from
temporal reconstruction of available snapshots.
The direct Duhamel approximation is
\begin{equation}
\label{eq:duhamel_quadrature}
\begin{aligned}
\widetilde u_{\mathrm{src},\theta}^{\mathrm q}
(t,\mathbf{x})
=
\sum_{j=1}^{N_q}\omega_j
\sum_{n=1}^{N_s}w_n\,
&K_\theta(t-s_j,\mathbf{x},\boldsymbol{\xi}_n;\gamma)\\
&\cdot f_h(s_j,\boldsymbol{\xi}_n).
\end{aligned}
\end{equation}
The initial-state contribution is evaluated with the
same spatial rule.

The separable kernel permits spatial contraction
before temporal integration. Define
\begin{equation}
\label{eq:discrete_source_projection}
\begin{aligned}
\widetilde q_{R,j}
&=
M_{B,\theta}(\gamma)^\top
\sum_{n=1}^{N_s}
w_n\psi_\theta(\boldsymbol{\xi}_n)^\top
f_h(s_j,\boldsymbol{\xi}_n),\\
\widetilde b_{E,j}
&=M_{A,\theta}(\gamma)\widetilde q_{R,j},
\end{aligned}
\end{equation}
and
\begin{equation}
\label{eq:latent_duhamel_quadrature}
\widetilde z_{\mathrm{src}}^{\mathrm q}(t)
=
\sum_{j=1}^{N_q}
\omega_j e^{(t-s_j)A_\theta}\widetilde b_{E,j}.
\end{equation}

\begin{proposition}
\label{prop:equivalence}
For the factorization~\eqref{eq:kernel_factorization},
identical quadrature rules and source values give
\[
\widetilde u_{\mathrm{src},\theta}^{\mathrm q}
(t,\mathbf{x})
=
\Phi_\theta(\mathbf{x})
\widetilde z_{\mathrm{src}}^{\mathrm q}(t).
\]
The identity holds independently of quadrature accuracy.
\end{proposition}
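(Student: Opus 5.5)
The plan is to substitute the factorization~\eqref{eq:kernel_factorization} into the direct quadrature~\eqref{eq:duhamel_quadrature}. Then I would use linearity of matrix products to pull every factor that does not depend on the summation indices outside the relevant sums. For each quadrature node the kernel value is the scalar
\[
K_\theta(t-s_j,\mathbf{x},\boldsymbol{\xi}_n;\gamma)
=\Phi_\theta(\mathbf{x})e^{(t-s_j)A_\theta}M_{A,\theta}(\gamma)M_{B,\theta}(\gamma)^\top\psi_\theta(\boldsymbol{\xi}_n)^\top,
\]
a product of a $1\times E$ row, $E\times E$ and $E\times R$ matrices, an $R\times E$ matrix, and an $E\times 1$ column. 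Multiplying this scalar by the scalar $w_n f_h(s_j,\boldsymbol{\xi}_n)$ and moving that scalar to the right of $\psi_\theta(\boldsymbol{\xi}_n)^\top$ gives a product of matrices with a vector.

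First I would fix $j$ and perform the inner sum over $n$. Only the rightmost factor $\psi_\theta(\boldsymbol{\xi}_n)^\top w_n f_h(s_j,\boldsymbol{\xi}_n)$ depends on $n$. By distributivity of matrix multiplication over finite sums, the inner sum equals
\[
\Phi_\theta(\mathbf{x})e^{(t-s_j)A_\theta}M_{A,\theta}(\gamma)M_{B,\theta}(\gamma)^\top\sum_{n}w_n\psi_\theta(\boldsymbol{\xi}_n)^\top f_h(s_j,\boldsymbol{\xi}_n).
\]
By~\eqref{eq:discrete_source_projection}, this is exactly $\Phi_\theta(\mathbf{x})e^{(t-s_j)A_\theta}\widetilde b_{E,j}$.

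Second, I would perform the outer sum over $j$ with weights $\omega_j$. Here $\Phi_\theta(\mathbf{x})$ is independent of $j$ and can be factored out on the left. The remaining sum is $\sum_j\omega_j e^{(t-s_j)A_\theta}\widetilde b_{E,j}=\widetilde z^{\mathrm q}_{\mathrm{src}}(t)$ by~\eqref{eq:latent_duhamel_quadrature}, which yields the claimed identity.

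There is no real obstacle, since the argument uses only finite sums and associativity and distributivity of matrix products. The one point to state explicitly is the hypothesis ``identical quadrature rules and source values'': both sides must use the same nodes $s_j,\boldsymbol{\xi}_n$, weights $\omega_j,w_n$, and data $f_h(s_j,\boldsymbol{\xi}_n)$. Nothing is assumed about how well these sums approximate the integrals in~\eqref{eq:duhamel_full} or~\eqref{eq:duhamel_latent}. The identity is therefore purely algebraic and holds independently of quadrature accuracy, as asserted. I would also note that the same factorization applies verbatim to the initial-state term evaluated with the same spatial rule.
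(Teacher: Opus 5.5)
Your proposal is correct and follows the paper's proof: substitute the factorization~\eqref{eq:kernel_factorization} into~\eqref{eq:duhamel_quadrature}, group the inner spatial sum into $\widetilde q_{R,j}$ (hence $\widetilde b_{E,j}$), and factor out $\Phi_\theta(\mathbf{x})$. Your argument is a more detailed version of the paper's, and its exactness rests, as the paper's does, on the sums being finite.
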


\begin{proof}
Substitute~\eqref{eq:kernel_factorization} into
\eqref{eq:duhamel_quadrature}, factor out
$\Phi_\theta(\mathbf{x})$, and group the spatial sum
into $\widetilde q_{R,j}$.
All sums are finite, so the rearrangement is exact.
\end{proof}

Thus, kernel and latent evaluation of the same quadrature
have identical discretization errors in exact arithmetic.
The latent vector is independent of the output location
and can be reused to reconstruct the field at all
requested spatial points, without assembling the full
space--time kernel array.

For $B$ problem instances, $N_s=N^2$ spatial nodes,
and $n_t$ available source snapshots per instance,
assembling the projected features
$\psi_\theta(\boldsymbol{\xi}_n)M_{B,\theta}(\gamma)$
costs $\mathcal{O}(BN_sER)$.
After this preparation, source projection and lifting
cost
\begin{equation}
\label{eq:source_projection_complexity}
\mathcal{O}\!\left(Bn_t(N_sR+ER)\right).
\end{equation}
These counts exclude neural feature evaluation and
hypernetwork generation. Reuse requires fixed network
parameters, contexts, and spatial nodes.

If temporal reconstruction is linear, projection can
be performed once per available snapshot and interpolation
subsequently carried out in latent space.
Otherwise, each newly queried source field must be
projected. Temporal quadrature and propagation then
involve $E$-dimensional vectors and matrix functions,
while spatial resolution enters input projection and
output reconstruction. Different time lags generally
require different exponentials, but the same exponential
can be shared across spatial queries.

The forced system~\eqref{eq:latent_ode} also supports
exponential time integration
\cite{HochbruckOstermann2010}, including the
$\phi_1$-based schemes described in
Section~\ref{sec:methods:etd}.
Such schemes provide a different temporal discretization
of the same continuous latent response; their agreement
with Gauss quadrature is not implied by
Proposition~\ref{prop:equivalence}.

\subsection{Exponential integration of the latent source}
\label{sec:methods:etd}

The forced latent system~\eqref{eq:latent_ode} can also
be evaluated by exponential time stepping
\cite{HochbruckOstermann2010}.
Let $0=t_0<\cdots<t_M=T$, with
$h_n=t_{n+1}-t_n$, and let $b_h$ denote the numerically
reconstructed and spatially projected source.
The variation-of-constants formula on one time interval is
\begin{equation}
\label{eq:latent_step_exact}
\begin{aligned}
z_{\mathrm{src}}(t_{n+1})
={}&e^{h_nA_\theta}z_{\mathrm{src}}(t_n)\\
&+\int_0^{h_n}
e^{(h_n-\tau)A_\theta}b_h(t_n+\tau)\,\mathrm{d}\tau.
\end{aligned}
\end{equation}
Here the response is defined relative to the prescribed
numerical source $b_h$.

Define the entire matrix function
\begin{equation}
\label{eq:phi1_definition}
\phi_1(Z)
=
\sum_{k=0}^{\infty}\frac{Z^k}{(k+1)!}
=
\int_0^1 e^{(1-\sigma)Z}\,\mathrm{d}\sigma.
\end{equation}
This definition requires no matrix inverse and gives
$\phi_1(0)=I_E$.
Freezing the forcing at the left endpoint yields
\begin{equation}
\label{eq:etd_left}
z_{n+1}
=
e^{h_nA_\theta}z_n
+
h_n\phi_1(h_nA_\theta)b_h(t_n),
\qquad z_0=0.
\end{equation}
We refer to this update as the standard $\phi_1$ scheme.
The midpoint variant instead uses
\begin{equation}
\label{eq:etd_midpoint}
\begin{aligned}
z_{n+1}
={}&e^{h_nA_\theta}z_n\\
&+h_n\phi_1(h_nA_\theta)
b_h\!\left(t_n+\frac{h_n}{2}\right).
\end{aligned}
\end{equation}
Both updates integrate the homogeneous dynamics exactly
and are exact for forcing that is constant on each step,
provided the matrix functions are evaluated exactly.

For fixed $A_\theta$ and sufficiently smooth $b_h$,
the left-endpoint and midpoint schemes have classical
global temporal orders one and two, respectively.
These statements concern time discretization of the
prescribed latent system; they do not include source
reconstruction or matrix-function approximation errors,
and do not assert error bounds uniform in
$\|A_\theta\|$.

At an observation time reached by the integration grid,
the source prediction is reconstructed as
\[
u_{\mathrm{src},\theta}^{\mathrm{ETD}}
(t_n,\mathbf{x})
=
\Phi_\theta(\mathbf{x})z_n.
\]
The latent trajectory can therefore be shared across
all requested spatial locations.
Gauss quadrature and exponential time stepping
approximate the same continuous latent response,
but generally produce different discrete predictions;
Proposition~\ref{prop:equivalence} concerns only
rearrangements of an identical quadrature rule.

\section{Experimental Setup}

\subsection{Dataset and Training Configuration}

The fair benchmark used the following fixed protocol:

\begin{center}
\scriptsize
\begin{longtable}{@{}>{\raggedright\arraybackslash}p{0.36\textwidth}>{\raggedright\arraybackslash}p{0.58\textwidth}@{}}
\toprule
Item & Configuration \\
\midrule
Data cache & \texttt{cache\_structured\_fair\_seed20260710\_nt10} \\
Cache seed & \texttt{20260710} \\
Training / validation / held-out samples & 6000 / 1200 / 1200 \\
Model seeds & 7, 42, 123, 2024, 2025 \\
Maximum epochs & 1000 \\
Batch size & 16 \\
Training points per sample & 16 \\
Learning rate & \(10^{-3}\) \\
Validation frequency & every 50 epochs \\
Validation sampling seed & \texttt{20260711} \\
Rank & 96 \\
Spatial quadrature resolution & 20 \\
Temporal quadrature nodes & 8 \\
Curriculum epochs & 500 \\
Reconstruction loss & Smooth L1 \\
Source loss weight & 30 \\
Held-out points per sample & 128 \\
Primary ODE embedding dimension & 128 \\
Parameter-matched baseline embedding & 208 \\
ODE generator & non-dissipative \\
Identity skip & disabled \\
Fourier features & disabled \\
\(\phi_1\) order & 8 \\
\(\phi_1\) substep threshold & 0.75 \\
Maximum \(\phi_1\) substeps & 16 \\
\bottomrule
\end{longtable}
\end{center}

The cache was generated once and shared across all model seeds. Validation selected \texttt{best\_model.pt}; the held-out \texttt{test.npz} split was consumed only after model selection. The benchmark recorded the Git commit, worktree state, Python and PyTorch versions, GPU information, installed packages, model metadata, and all required artifacts.

Four independently trained primary arms were evaluated:

\begin{enumerate}
\item non-ODE baseline with Gauss--Legendre quadrature;
\item parameter-matched non-ODE baseline with Gauss--Legendre quadrature;
\item semigroup-aware ODE model with Gauss--Legendre quadrature;
\item semigroup-aware ODE model with the standard \(\phi_1\) source integrator.
\end{enumerate}

Integrator swaps were then evaluated using identical ODE-\(\phi_1\) checkpoints. Consequently, differences between swap rows cannot be attributed to training randomness or to a different learned generator.

\subsection{Evaluation Metrics}

Held-out relative L2 error is reported as a percentage. Separate values are computed for initial-condition and source samples, together with an aggregate value. Lower values are better.

Semigroup consistency is measured using

\[
r(t,s;u_0)=
\frac{\lVert P(t+s)u_0-P(s)[P(t)u_0]\rVert_2}
{\lVert P(t+s)u_0\rVert_2+\varepsilon}\cdot
\]

The reported semigroup value is dimensionless and lower is better. Scenario L2 is the mean relative L2 error across the 23 manually constructed evaluation scenarios. Timing is end-to-end milliseconds per observation and includes both conditioning and propagation. Values are mean \(\pm\) sample standard deviation over five seeds.

\section{Results and Discussion}

\subsection{Comparative Performance of Kernel Models}

\begin{center}
\scriptsize
\resizebox{\textwidth}{!}{%
\begin{tabular}{@{}llllllll@{}}
\toprule
Model & Integrator & Held-out L2 (\%) & Scenario L2 (\%) & Semigroup residual & Parameters (M) & Train time (h) & End-to-end (ms/obs) \\
\midrule
Baseline & Quadrature & 29.135 \(\pm\) 1.843 & 25.173 \(\pm\) 1.774 & 0.361 \(\pm\) 0.034 & 3.291 & 1.615 \(\pm\) 0.967 & 0.011 \(\pm\) 0.006 \\
Baseline, parameter matched & Quadrature & 28.111 \(\pm\) 2.915 & 25.058 \(\pm\) 3.232 & 0.376 \(\pm\) 0.041 & 5.435 & 1.696 \(\pm\) 1.196 & 0.012 \(\pm\) 0.006 \\
ODE & Quadrature & 26.098 \(\pm\) 1.319 & 22.766 \(\pm\) 1.465 & \textbf{0.270 \(\pm\) 0.036} & 5.425 & 7.022 \(\pm\) 3.736 & 0.045 \(\pm\) 0.027 \\
ODE & \(\phi_1\) & \textbf{25.743 \(\pm\) 0.880} & \textbf{22.636 \(\pm\) 0.542} & 0.310 \(\pm\) 0.026 & 5.425 & 6.178 \(\pm\) 1.652 & 0.053 \(\pm\) 0.023 \\
\bottomrule
\end{tabular}%
}
\end{center}

The ODE-quadrature model reduced held-out error by 3.037 percentage points, or 10.4\% relative, compared with the standard baseline. Relative to the parameter-matched baseline, it reduced held-out error by 2.013 points, or 7.2\% relative. Its semigroup residual was 25.2\% lower than that of the standard baseline and 28.2\% lower than that of the parameter-matched baseline.

The ODE-\(\phi_1\) model achieved the lowest primary mean held-out and scenario errors. However, the independently trained ODE arms differ slightly in their initial-condition accuracy, and the same-checkpoint analysis below shows that this aggregate advantage should not be attributed to the standard \(\phi_1\) source integrator.

\subsection{Initial-Condition and Source Error Analysis}

\begin{center}
\scriptsize
\begin{longtable}{@{}>{\raggedright\arraybackslash}p{0.22\textwidth}>{\raggedright\arraybackslash}p{0.23\textwidth}>{\raggedright\arraybackslash}p{0.23\textwidth}>{\raggedright\arraybackslash}p{0.24\textwidth}@{}}
\toprule
Model & IC L2 (\%) & Source L2 (\%) & Total L2 (\%) \\
\midrule
Baseline + quadrature & 29.396 \(\pm\) 1.859 & 21.855 \(\pm\) 1.406 & 29.135 \(\pm\) 1.843 \\
Parameter-matched baseline + quadrature & 28.371 \(\pm\) 2.937 & 20.798 \(\pm\) 2.354 & 28.111 \(\pm\) 2.915 \\
ODE + quadrature & 26.317 \(\pm\) 1.333 & \textbf{20.063 \(\pm\) 0.933} & 26.098 \(\pm\) 1.319 \\
ODE + \(\phi_1\) & \textbf{25.939 \(\pm\) 0.886} & 20.427 \(\pm\) 0.740 & \textbf{25.743 \(\pm\) 0.880} \\
\bottomrule
\end{longtable}
\end{center}

The ODE-quadrature arm reduced source error by 1.792 percentage points (8.2\% relative) compared with the standard baseline and by 0.736 points (3.5\% relative) compared with the parameter-matched baseline. The lower total error of the independently trained ODE-\(\phi_1\) arm is driven by its lower initial-condition error rather than by its source integrator. The total metric is an energy-weighted aggregate, not the arithmetic mean of the IC and source percentages.

\subsection{Source-Integrator Ablation Study}

All rows in the next table use the same learned weights and the same latent generator. Only the inference-time source integrator changes.

\begin{center}
\scriptsize
\resizebox{\textwidth}{!}{%
\begin{tabular}{@{}lllll@{}}
\toprule
Integrator & Source L2 (\%) & Total L2 (\%) & End-to-end (ms/obs) & Interpretation \\
\midrule
Quadrature & 20.045 \(\pm\) 0.808 & 25.732 \(\pm\) 0.882 & approximately 0.040 & Direct reference implementation \\
Standard \(\phi_1\) & 20.427 \(\pm\) 0.740 & 25.743 \(\pm\) 0.880 & 0.053 \(\pm\) 0.023 & Left-endpoint forcing approximation \\
\(\phi_1\)-midpoint & \textbf{20.019 \(\pm\) 0.806} & \textbf{25.731 \(\pm\) 0.882} & \textbf{0.0395 \(\pm\) 0.0002} & Latent exponential propagation with midpoint forcing \\
\bottomrule
\end{tabular}%
}
\end{center}

The standard \(\phi_1\) update increased source error by 0.382 \(\pm\) 0.085 percentage points relative to quadrature and was worse for all five seeds. The exact two-sided paired sign-flip p-value was 0.0625, which is the smallest attainable two-sided value with five non-zero paired observations.

The midpoint update reduced source error by approximately 0.408 points, or 2.0\% relative, compared with the standard \(\phi_1\) update. Relative to quadrature, its average improvement was only 0.0259 points, or approximately 0.13\% relative. It nevertheless produced a lower source error for all five seeds:

\begin{center}
\scriptsize
\begin{longtable}{@{}>{\raggedright\arraybackslash}p{0.22\textwidth}>{\raggedright\arraybackslash}p{0.23\textwidth}>{\raggedright\arraybackslash}p{0.23\textwidth}>{\raggedright\arraybackslash}p{0.24\textwidth}@{}}
\toprule
Seed & Quadrature source L2 (\%) & \(\phi_1\)-midpoint source L2 (\%) & Midpoint minus quadrature (percentage points) \\
\midrule
7 & 20.6218 & 20.6001 & -0.0217 \\
42 & 19.8600 & 19.8299 & -0.0301 \\
123 & 20.8438 & 20.8119 & -0.0319 \\
2024 & 18.7798 & 18.7566 & -0.0232 \\
2025 & 20.1178 & 20.0951 & -0.0227 \\
\bottomrule
\end{longtable}
\end{center}

The mean paired midpoint-minus-quadrature difference was approximately \(-0.0259 \pm 0.0047\) percentage points. The exact two-sided sign-flip p-value is again 0.0625 because only five paired seeds are available. The effect is directionally consistent but practically small. The appropriate conclusion is numerical equivalence at the present scale, not a substantial accuracy advantage over quadrature.

The IC error, semigroup residual, parameter count, and training time are identical across same-checkpoint swaps, as expected: the source integrator does not alter initial-condition propagation or the learned generator.

\subsection{Semigroup Consistency and Generalization}

The ODE-quadrature model achieves the lowest end-to-end semigroup residual, $0.270\pm0.036$, compared with $0.361\pm0.034$ for the standard baseline and $0.376\pm0.041$ for the parameter-matched baseline. These reductions correspond to approximately 25.2\% and 28.2\%, respectively. The ODE-based models also achieve lower mean errors across the 23 evaluation scenarios, supporting improved generalization under the controlled experimental protocol. The semigroup diagnostic assesses the learned physical-space operator and does not establish an exact physical-space semigroup identity.

\subsection{Computational Efficiency and Complexity}
\label{sec:methods:complexity}

The factorized formulation separates spatial source
projection, latent temporal propagation, and output
reconstruction.
This organization is available to both Gauss quadrature
and exponential time stepping, as established by
Proposition~\ref{prop:equivalence}.
The computational benefit of spatial contraction before
temporal integration is therefore distinct from the
choice of temporal integrator.

Consider one source trajectory with fixed network
parameters, context, and spatial quadrature nodes.
Let $N_s$ denote the number of spatial nodes, $n_t$ the
number of available source snapshots, $E$ the latent
dimension, and $R$ the coupling rank.
As described in Section~\ref{sec:methods:latent},
feature preparation and snapshot projection have cost
\[
\mathcal{C}_{\mathrm{proj}}
=
\mathcal{O}\!\left(
N_sER+n_t(N_sR+ER)
\right).
\]
These counts exclude neural feature evaluation,
hypernetwork generation, and temporal source
reconstruction.
When source reconstruction uses scalar temporal weights
common to all spatial nodes, the projected snapshots
can be reconstructed directly in latent space.

Let $L$ denote the number of distinct observation times
and $M$ the total number of requested space--time output
values.
With $Q$ temporal quadrature nodes per observation time,
latent Gauss evaluation has representative cost
\[
\mathcal{C}_{\mathrm{Gauss}}
=
\mathcal{C}_{\mathrm{proj}}
+
\mathcal{O}\!\left(
LQ(E^3+E^2)+ME
\right),
\]
when dense exponentials are constructed separately for
each time lag.
The $E^2$ term accounts for their actions on latent
vectors.
Each accumulated latent response is shared across all
spatial queries at the corresponding observation time;
repeated time lags can permit additional reuse.

For exponential time stepping, let $J$ denote the number
of integration steps and $D\leq J$ the number of distinct
step sizes.
Caching the dense exponential and $\phi_1$ matrices
for each step size gives representative cost
\[
\mathcal{C}_{\mathrm{ETD}}
=
\mathcal{C}_{\mathrm{proj}}
+
\mathcal{O}\!\left(
DE^3+JE^2+ME
\right).
\]
For a uniform grid, $D=1$.
This count assumes that the requested observation times
are included in the computed trajectory.
Additional spatial queries at those times require only
decoding, whereas new observation times may require
further propagation or a separate temporal approximation.

Dense matrix-function construction is not intrinsic
to either latent evaluation path.
Matrix-function actions, structured generators, and
caching can reduce or amortize this cost
\cite{HochbruckOstermann2010}.
Their practical benefit depends on the generator,
latent dimension, required accuracy, and query pattern.

In the same-checkpoint comparison,
$\phi_1$-midpoint achieves a source error of
$20.019\pm0.806\%$, compared with
$20.045\pm0.808\%$ for direct quadrature, with comparable
measured inference time at the current benchmark scale.
These results support the accuracy of the midpoint
source-evaluation path.
A scaling study comparing both integrators with shared
spatial projections and comparable caching is needed
to quantify computational gains at larger query counts.

\section{Conclusions and Future Directions}
The findings demonstrate that the semigroup-aware ODE parameterization improves generalization, temporal consistency, and predictive accuracy compared with the standard baseline. Both ODE-based models achieve lower errors in held-out testing and the 23-scenario evaluation. These improvements cannot be attributed solely to model complexity, as the ODE models contain 5.425 million parameters compared with 5.435 million in the parameter-matched baseline, representing a difference of less than 0.2\%. Furthermore, the ODE-quadrature formulation exhibits superior temporal composition, reducing the semigroup residual by approximately 28.2\% relative to the parameter-matched baseline. Although the standard $\phi_1$ forcing approximation produces higher source errors and longer inference times than direct quadrature in the same-checkpoint comparison, the midpoint forcing modification effectively overcomes these limitations, achieving comparable source and total prediction accuracy with similar computational cost. Overall, the results establish the effectiveness of semigroup-aware latent propagation and demonstrate that the $\phi_1$-midpoint formulation provides a structured representation of source-driven dynamics while maintaining quadrature-level accuracy. However, the present findings support the advantages of the latent reformulation rather than establishing an asymptotic computational speedup.

\subsection{Limitations and Future Work}

The semigroup probe evaluates initial-condition propagation using a fixed conditioning slice, a finite collection of sinusoidal inputs, and a finite set of time pairs. It is an end-to-end consistency diagnostic rather than a direct algebraic verification of the latent matrix exponential. The 23-scenario benchmark contains challenging out-of-distribution cases and is therefore interpreted separately from the disjoint held-out evaluation. The \(\phi_1\)-midpoint comparison was performed after the primary benchmark campaign using the same trained checkpoints. It is consequently reported as a post-hoc same-checkpoint ablation rather than an independently trained primary arm. This distinction leaves the main comparison between the baseline and ODE parameterizations unchanged. A future scaling study should compare latent Gauss quadrature and exponential time stepping using shared
spatial projections and comparable caching strategies.
Varying temporal resolution, spatial query count,
observation times, and latent dimension, while reporting
accuracy, runtime, and peak memory, would quantify
the respective computational benefits.

\bigskip

\textbf{Acknowledgments}

This publication was partially funded by the PhD program in Mathematics and Computer Science at the University of Calabria, Cycle XXXIX, with the support of a scholarship financed by DM
118/2023 (CUP H23C23001760005), based on the PNRR funded by the European Union.\\
V. Colao  acknowledges partial support from the INdAM - GNAMPA Project "Nuovi aspetti teorici e computazionali per le EDO", CUP E53C25002010001, and from ICAR-CNR within the research group “Sustainable and Process-aware Artificial Intelligence” (SPAI), project DIT.AD001.242 “Sistemi Intelligenti e Continuum Quantum Computing”.
\\
\\
\\
\\
\textbf{Data Availability}
\\
{{All the data generated or analyzed during this study are included in this published article.}}
\\
\section*{Declarations}

\textbf{Conflicts of Interest}
{The authors declare that they have no conflict of interest.}

\bibliography{library}
\bibliographystyle{unsrt} 

\end{document}